  \setlist{nosep} 
\definecolor{lgreen}{rgb}{0.0, 0.48, 0.0}
\definecolor{lpurple}{rgb}{0.48, 0.0, 0.48}
\theoremstyle{definition}
\newtheorem{theorem}{Theorem}
\newtheorem{proposition}[theorem]{Proposition}
\newtheorem{lemma}[theorem]{Lemma}
\newtheorem{remark}[theorem]{Remark}
\renewcommand{\geq}{\geqslant}
\newcommand{\bigO}{\mathcal{O}}
\newcommand*{\ie}{\textit{i.e.}\@\xspace}
\newcommand{\exponential}[1]{\operatorname{#1}}
\newcommand{\eG}{\exponential{G}}
\newcommand{\eS}{\exponential{S}}
\newcommand{\eComplex}{\exponential{Complex}}
\newcommand{\eF}{\exponential{F}}
\newcommand{\graphic}[1]{\mathbf{#1}}
\newcommand{\gF}{\graphic{F}}
\newcommand{\gset}{\graphic{Set}}
\newcommand{\gdag}{\graphic{DAG}}
\newcommand{\hadamard}{\odot}
\newcommand{\family}[1]{\mathcal{#1}}
\newcommand{\fF}{\family{F}}
\newcommand{\fdag}{\family{DAG}}
\newcommand{\fsmd}{\family{SMD}}
\definecolor{bblue}{rgb}{0.2, 0.4, 0.8}
\definecolor{bgreen}{rgb}{0.2, 0.6, 0.4}
\definecolor{bred}{rgb}{0.8, 0.4, 0.2}
\definecolor{bviolet}{rgb}{0.7, 0.2, 0.7}
\definecolor{blackred}{rgb}{0.6, 0.3, 0.3}
\definecolor{blackblue}{rgb}{0.3, 0.3, 0.6}
\tikzset{
  treenode/.style = {align=center, inner sep=0pt, text centered,
    font=\sffamily},
  arnBleuPetit/.style = {treenode, circle, bblue, draw=bblue,
    fill=bblue!10,
    minimum width=0.8em, minimum height=0.5em
  },
  arnNoirPetit/.style = {treenode, triangle, draw=black,
    minimum width=0.8em, minimum height=0.5em},
  arnBleuGrande/.style = {treenode, circle, bblue, draw=bblue,
    text width=1.5em, very thick,
    fill=bblue!10},
  arnVioletGrande/.style = {treenode, circle, bviolet, draw=bviolet,
    text width=1.5em, very thick,
    fill=bblue!10},
}
\title{Counting directed acyclic and elementary digraphs}
\date{\today}
\author{\'Elie de Panafieu\thanks{\'Elie de Panafieu is supported by LINCS www.lincs.fr},
Bell Labs France, Nokia\\
Sergey Dovgal\thanks{dovgal@lipn.fr,
Sergey Dovgal is supported by ANR project METACONC.},
LIPN, Universit\'{e} Paris 13}
\begin{document}

\maketitle

\begin{abstract}
Directed acyclic graphs (DAGs) can be characterised as directed graphs whose
strongly connected components are isolated vertices. Using this restriction on
the strongly connected components, we discover that when $m = cn$, where $m$ is the number of
directed edges, $n$ is the number of vertices, and $c < 1$, the asymptotic
probability that a random digraph is acyclic is an explicit
function $p(c) = e^{-c}(1-c)$.
When $m = n(1 + \mu n^{-1/3})$, the
asymptotic behaviour changes, and the probability that a digraph is acyclic
becomes $n^{-1/3} C(\mu)$, where $C(\mu)$ is an explicit function of $\mu$.
\L uczak and Seierstad (2009, Random Structures \& Algorithms, 35(3), 271--293)
showed that, as $\mu \to -\infty$, the strongly
connected components of a random digraph with $n$ vertices and $m = n(1 + \mu n^{-1/3})$ directed edges
are, with high probability, only isolated vertices and cycles. We call such
digraphs elementary digraphs. We express the probability that a random digraph
is elementary as a function of $\mu$.
Those results are obtained using techniques from analytic combinatorics,
developed in particular to study random graphs. 
\end{abstract}

\begin{section}{Introduction}
\label{section:introduction}
    Directed Acyclic Graphs (DAGs) appear
    naturally in the study of compacted trees, automaton for 
    finite languages and partial orders.
    Until now, their asymptotics was
    known only 
    for \( n \) vertices and
    \( m = \Theta( n^2 )\) edges (dense case). 
    In this paper, we give
    a solution to the sparse case \( m = n(1 + \mu n^{-1/3}) \)
    with \( \mu \) bounded or going to $-\infty$
    (\cref{theorem:asymptotics:dags}).
    The first case exhibits a phase transition
    reminiscent of directed graphs (see~\cite{luczak2009critical}).
    In the second case, when \( 0 < \lim \frac{m}{n} < 1 \), we have
    \( \mathbb P( \)digraph is acyclic\()= e^{-m/n}(1-\frac{m}{n}) \).

    \paragraph*{Exact and asymptotic enumeration.} In 1973, Robinson~\cite{robinson1973counting}
    obtained his beautiful formula for the number \( \fdag_{n,m} \) of labeled
    DAGs with \( n \) vertices and \( m \) edges
    \[
        \fdag_{n,m} = n! [z^n w^m] \dfrac{(1 + w)^{n \choose 2}}
        {\sum_{n \geq 0} (1 + w)^{- {n \choose 2}} \frac{(-z)^n}{n!}},
    \]
    and developed a framework for the 
enumeration of digraphs whose strongly connected
    components
    (called \emph{strong components} in the following)
    belong to a given family of allowed strongly connected digraphs.
    This yielded the asymptotics of dense DAGs
    in~\cite{bender1986asymptotic}.
    The structure of random DAGs has been studied
    in~\cite{liskovets1976number,mckay1989shape,gessel1996counting}.

    We say that a digraph is \emph{elementary} if all its strong
    components are either isolated vertices or cycles.
    In~\cite{luczak1990phase} and
    \cite{luczak2009critical} it was shown that
    if the parameter \( c=\frac{m}{n} \) is less than one, then a 
    digraph is elementary asymptotically almost surely.
    More precisely, this happens when a digraph has \( n \) vertices and \( m =
    n(1 + \mu n^{-1/3}) \) edges, as \( \mu \to -\infty \) with \( n \).
    Other interesting structural results around
    the phase transition point are available
    in~\cite{pittel2017birth,goldschmidt2019scaling}. 
    The authors of~\cite{goldschmidt2019scaling} show that the
    strong components have asymptotically almost surely cubic kernels, \ie the sum
    of the degrees of each of its nodes is at most three with high probability.
    This means that these cubic kernels play an analogous role as the classical
    cubic kernels in a random graphs, see~\cite{janson1993birth}.

    A forthcoming independent approach of~\cite{sparserandomacyclicdigraphs} in
    the analysis of asymptotics of DAGs (manuscript to appear), is similar in
    spirit to the tools used in~\cite{flajolet2004airy} 
    and relies on a bivariate singularity analysis of the generating function
    of DAGs.
    Their technique promises to unveil sparse DAGs asymptotics,
    covering as well the case
    where the ratio of the numbers of edges and vertices is bounded,
    but greater than $1$ (the \emph{supercritical case}).

    \paragraph*{Our contribution.}
    Typically, the analysis of graphs is technically easier when loops
    and multiple edges are allowed,~\cite{janson1993birth}:
    an adaptation of the symbolic techniques to the case of
    simple graphs becomes rather a technical, but not a conceptual
    obstacle.
    In~\cite{panafieu2016analytic} and \cite{collet2018threshold}
    the dedicated \emph{patchwork} concept is introduced allowing to handle
    this difficulty.
    The same principle concerns directed graphs. Nevertheless, in the
    current paper we consider the case of \emph{simple digraphs} where loops
    and multiple edges are forbidden. In our model, however, the cycles of
    size 2 are allowed, because it is natural to suppose that for each two
    vertices \( i \) and \( j \) both directions are allowed. The analysis
    of simple digraphs is technically heavier than the analysis of
    multidigraphs, but we prefer to demonstrate explicitly that such an
    application is indeed possible.

    Firstly, we transform the generating function of DAGs so that it can
    be decomposed into an infinite sum. Each of its summands is analysed
    using a new bivariate semi-large powers lemma which is a generalisation
    of~\cite{banderier2001random}. We discover (in the above notations) that
    the first term of this infinite expansion is dominating in the
    \emph{subcritical} case, \ie when \( \mu \to -\infty \); in the case
    when \( \mu \) is bounded (the \emph{critical} case), all the terms give
    contributions of the same order.
    Next, using the symbolic tools for directed graphs
    from~\cite{de2019symbolic}, we express the generating function of
    elementary digraphs and apply similar tools to obtain explicitly the
    phase transition curve in digraphs, that is, the probability that a
    digraph is elementary, as a function of \( \mu \).

    \paragraph*{Related studies.}
        An alternative exact formula for the number of DAGs (without taking into
        account the number of edges) is obtained in~\cite{mckay2004acyclic}, where
        the authors show that the number of DAGs with \( n \) labeled vertices is
        equal to the number of \( n \times n \) $(0, 1)$-matrices whose eigenvalues
        are positive real numbers.

        It is worth mentioning that the exact enumeration formula for acyclic
        digraphs does not give an immediate receipt for uniform generation, as
        the formula includes inclusions-exclusions. The question of random
        generation was settled in~\cite{kuipers2015uniform}.

    Analytic techniques, largely covered in~\cite{flajolet2009analytic}, are
    efficient for asymptotic analysis, because the coefficient 
    extraction operation is naturally expressed through Cauchy formula.
    A recent study~\cite{greenwood2018asymptotics} is dealing with bivariate
    algebraic functions. In their case, a combination of two Hankel contours,
    necessary for careful analysis, can have a complicated mutual configuration
    in two-dimensional complex space, so a lot of details needs to be accounted
    for. Our approach is close to theirs, while we try to avoid the mentioned
    difficulty in our study. The principle idea behind our bivariate semi-large
    powers lemma is splitting a double complex integral into a product of two
    univariate ones.

        EGF of dags is an inverse deformed exponential function.  This
        function is studied in \cite{wang2018zeros}, see references
        therein.  Special functions very similar to EGF of DAGs arise
        with connection to Airy function, see \cite{banderier2001random}
        \cite{flajolet2004airy}.
        The behaviour of the transition curve we obtain for DAGs is
        somewhat similar to the result of~\cite{herve2011random}.

    \paragraph*{Structure of the paper.}
    In~\cref{section:exact:expressions} we present new exact reformulations of the numbers of DAGs and elementary digraphs, which are later used
    in~\cref{section:asymptotic:analysis} to ease the asymptotic analysis.

\end{section}

\begin{section}{Exact expressions using generating functions}
    \label{section:exact:expressions}

    Consider the following model of graphs and directed graphs.
    A graph $G$ is characterized by
    its set $V(G)$ of labeled vertices
    and its set $E(G)$ of unoriented unlabeled edges.
    Loops and multiple edges are forbidden.
    The numbers of its vertices and edges
    are denoted by $n(G)$ and $m(G)$.
    An $(n,m)$-graph (or digraph) is a graph (or digraph) with $n$ vertices and $m$ edges.

    We consider digraph without loops, such that from any vertex
    \( i \) to any vertex \( j \) there can be at most one directed edge.
    Therefore, two edges can link the same pair of vertices
    only if their orientations are different.

    \begin{subsection}{Exponential and graphic generating functions}

        Two helpful tools in the study of graphs and directed graphs are the
    \emph{exponential generating function} (EGF) and \emph{graphic generating
    function} (GGF).
        The EGF $\eF(z, w)$
        and the GGF $\gF(z, w)$
        associated to a graph or digraph family $\fF$ are defined as
        \[
          \eF(z,w) =
          \sum_{G \in \fF} w^{m(G)} \frac{z^{n(G)}}{n(G)!},
          \quad
          \gF(z,w) =
          \sum_{G \in \fF} w^{m(G)}
            \frac{z^{n(G)}}{n(G)! (1 + w)^{ {m(G) \choose 2} }}.
        \]
        The total numbers of $(n,m)$-graphs and $(n,m)$-digraphs
        are $\binom{n(n-1)/2}{m}$ and $\binom{n(n-1)}{m}$.
        The classical counting expression for directed acyclic graphs is
        attributed to Robinson~\cite{robinson1973counting}.
        The EGF \( \eG(z, w) \) of all graphs and GGF of directed acyclic graphs
        \( \gdag(z, w) \) are given by
        \begin{equation}
            \label{eq:graphs:dags}
            \eG(z,w) = \sum_{n \geq 0} (1+w)^{\binom{n}{2}} \frac{z^n}{n!},
            \quad
            \gdag(z, w) = \dfrac{1}{
                \sum\limits_{n \geq 0} (1+w)^{-\binom{n}{2}} \frac{(-z)^n}{n!}
            }.
        \end{equation}

        We can reuse the EGF of graphs~\eqref{eq:graphs:dags} to obtain an
        alternative expression for the number of $(n,m)$-DAGs \( \fdag_{n,m} \):
        \begin{equation}
            \label{th:dags_exact}
            \fdag_{n,m} =
            n! [z^n w^m]
            \frac{(1+w)^{\binom{n}{2}}}{\eG \left(-z, -\frac{w}{1+w} \right)}.
        \end{equation}

        Before considering various digraph families, we need to recall the
        classical generating functions of simple graph families, namely the rooted
        and unrooted labeled trees and unicycles.
        A \emph{unicycle} is a connected graph that has the same numbers of vertices
        and edges. Hence, it contains exactly one cycle.

        \begin{proposition}[{\cite{janson1993birth}}] \label{th:trees_and_unicycles}
        The EGFs $T(z)$ of rooted trees,
        $U(z)$ of trees and
        $V(z)$ of unicycles
        are characterized by the relations
        \[
            T(z) = z e^{T(z)}, \quad
            U(z) = T(z) - \frac{T(z)^2}{2}, \quad
            V(z) = \frac{1}{2} \log \left( \frac{1}{1 - T(z)} \right)
            - \frac{T(z)}{2} - \frac{T(z)^2}{4}.
        \]
        \end{proposition}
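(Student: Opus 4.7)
The plan is to derive each of the three generating functions by a direct symbolic/combinatorial argument, exploiting the classical ``set of subtrees hanging off a core'' decomposition.

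For $T(z)$, I would invoke the symbolic method: a rooted tree is a root vertex together with a (possibly empty) set of rooted subtrees, each again a rooted tree. This directly translates to $T(z) = z \exp(T(z))$. From this functional equation I would derive, by logarithmic differentiation, the useful identity $T'(z) = T(z)/(z(1 - T(z)))$, which will be reused in the other two cases.

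For $U(z)$, the shortest route is to use the bijective fact that $n \cdot U_n = T_n$ (choosing a root gives a rooted tree, and a rooted tree on $n$ vertices has $n$ choices of root), so that $z U'(z) = T(z)$, i.e.\ $U'(z) = T(z)/z$ with $U(0)=0$. It then suffices to check that the proposed closed form $T - T^2/2$ satisfies this differential equation: differentiating gives $T'(1 - T)$, and plugging in the identity $T' = T/(z(1-T))$ collapses this to $T/z$, as required.

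For $V(z)$, I would use the structural decomposition of a unicycle as a cycle of length $k \geq 3$ (since loops and double edges are excluded) with a rooted tree hanging off each cycle vertex. The EGF of a labelled $k$-cycle decorated by rooted trees at its vertices is $T(z)^k / (2k)$, the factor $2k$ accounting for the dihedral symmetry group of the cycle. Summing over $k \geq 3$ and recognising $\sum_{k \geq 1} x^k/k = \log(1/(1-x))$, I would isolate the $k=1$ and $k=2$ terms:
\[
V(z) = \sum_{k \geq 3} \frac{T(z)^k}{2k}
= \frac{1}{2}\left(\log\frac{1}{1-T(z)} - T(z) - \frac{T(z)^2}{2}\right),
\]
which is the claimed formula.

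The main conceptual step is the symmetry factor $1/(2k)$ for the decorated cycle, which one must justify carefully: a labelled cyclic arrangement of $k$ rooted trees has $2k$ equivalent representations under rotation and reflection. The rest of the work is algebraic manipulation of the equation $T = ze^T$; no analytic difficulties arise.
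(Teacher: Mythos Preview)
Your proof is correct and is precisely the standard derivation. Note, however, that the paper does not actually supply a proof of this proposition: it is stated with a citation to \cite{janson1993birth} and used as a black box. So there is no ``paper's own proof'' to compare against; your argument is essentially the one found in that reference (and in any textbook treatment of the symbolic method), and nothing needs to be added.
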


        The \emph{excess} of a graph (not necessarily connected)
        is defined as the difference between
        its numbers of edges and vertices. For example, trees have excess \( -1
        \), while unicycles have excess \( 0 \).
        The bivariate EGFs of graphs of excess $k$ can be
        obtained from their univariate EGFs by substituting \( z \mapsto zw \)
        and multiplying by \( w^k \). In particular, 
        \( T(z, w) = T(zw)/w \), \( U(z, w) = U(zw)/w \), \( V(z, w) = V(zw) \).

        We say that a graph is \emph{complex} if
        all its connected components have a positive excess.
        The EGF of complex graphs of excess $k$ is
        \[
            \eComplex_k(z) = [y^k] \eComplex(z/y,y).
        \]
        %
        %
        It is known (see~\cite{janson1993birth}) that a
        complex graph of excess \( r \)
        is reducible to a \emph{kernel} (multigraph of minimal degree at least $3$)
        of same excess,
        by recursively removing vertices of degree $0$ and $1$
        and fusioning edges sharing a degree $2$ vertex.
        The total weight of \emph{cubic} kernels (all degrees equal to $3$) of excess $r$
        is given by~\eqref{eq:complex:cf}.
        They are central in the study of large critical graphs,
        because non-cubic kernels do not typically occur.
        %

        \begin{proposition}[{\cite[Section 6]{janson1993birth}}] \label{th:complex}
        For each \( r \geq 0 \) there is a polynomial \( P_r(T) \) such that
        \begin{equation}
            \label{eq:complex:cf}
            \eComplex_r(z) =
            e_{r}\cfrac{T(z)^{5r}}{(1-T(z))^{3r}}
            +
            \dfrac{P_r(T(z))}{(1 - T(z))^{3r - 1}},
            \quad
            \text{where}
            \quad
            e_r = \frac{(6r)!}{2^{5r} 3^{2r} (2r)! (3r)!}. 
        \end{equation}
        \end{proposition}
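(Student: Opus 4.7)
The plan is to combine the kernel decomposition of complex graphs with the symbolic method: cubic kernels will give the leading $(1-T(z))^{-3r}$ singularity, while non-cubic kernels together with the various simple-graph boundary corrections all fall inside $(1-T(z))^{-(3r-1)}$ times a polynomial in $T(z)$, i.e.\ inside the $P_r(T(z))$ term.

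First I would invoke the kernel decomposition from \cite{janson1993birth}: every complex graph $G$ of excess $r$ has a unique \emph{kernel} $K(G)$, a multigraph of minimum degree $3$ and excess $r$, obtained by iteratively deleting vertices of degree $0$ and $1$ and then suppressing vertices of degree $2$. Conversely $G$ is reconstructed from $K(G)$ by attaching a rooted forest at each kernel vertex and subdividing each kernel edge into a path of length $\ell\geq 1$ carrying a rooted forest at each of its $\ell-1$ internal vertices. Using $T(z)=ze^{T(z)}$, the EGF of a labeled vertex decorated by a rooted forest is exactly $T(z)$, so each decorated kernel edge contributes $\sum_{\ell\geq 1}T(z)^{\ell-1}=(1-T(z))^{-1}$ and each decorated kernel vertex contributes $T(z)$. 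A kernel $K$ with $v(K)$ vertices and $e(K)$ edges therefore produces a term proportional to $T(z)^{v(K)}/(1-T(z))^{e(K)}$, with the proportionality constant absorbing both the automorphism group of $K$ and the corrections forbidding self-loops and multi-edges in the reconstructed simple graph.

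Next I would classify kernels by degree sequence. The handshake identity $\sum_v d(v)=2e(K)=2(v(K)+r)$ together with minimum degree $3$ yields $v(K)\leq 2r$, with equality iff $K$ is cubic, in which case $e(K)=3r$. Every non-cubic kernel therefore has $e(K)\leq 3r-1$ and contributes a polynomial in $T(z)$ divided by $(1-T(z))^{3r-1}$, to be absorbed into $P_r(T(z))$. The leading constant $e_r$ is obtained by a configuration-model enumeration of cubic multigraphs on $2r$ labeled vertices: pair the $6r$ half-edges in $(6r)!/(2^{3r}(3r)!)$ ways, divide by $(3!)^{2r}=2^{2r}3^{2r}$ for reorderings of the three half-edges at each vertex, and by $(2r)!$ to pass from a labeled count to an EGF coefficient, giving exactly $e_r=(6r)!/(2^{5r}3^{2r}(2r)!(3r)!)$. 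Finally, the elementary telescoping identity $T(z)^{2r}-T(z)^{5r}=T(z)^{2r}(1-T(z))(1+T(z)+\cdots+T(z)^{3r-1})$ shows that the $T(z)^{5r}$ numerator used in the statement differs from the directly obtained $T(z)^{2r}$ only by a contribution of the form (polynomial in $T(z)$)$/(1-T(z))^{3r-1}$, again absorbed into $P_r$.

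The delicate part I expect is the simple-graph bookkeeping: a loop in the kernel must be subdivided by a path of length at least $3$ to avoid reintroducing a self-loop or a double edge, two parallel kernel edges cannot simultaneously be subdivided trivially, and the configuration-model weight overcounts precisely when the kernel has loops or parallel edges. The task is to verify that all such corrections only shift terms of pole order at most $3r-1$, so that the leading coefficient $e_r$ of the $(1-T(z))^{-3r}$ singularity is left intact. This verification is carried out in \cite[Section~6]{janson1993birth}, from which the stated form of $\eComplex_r(z)$ follows.
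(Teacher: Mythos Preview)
The paper does not prove this proposition at all: it is stated as a citation from \cite[Section~6]{janson1993birth} and used as a black box. Your sketch correctly reproduces the argument from that reference---kernel reduction, the handshake bound $v(K)\le 2r$ with equality for cubic kernels, the configuration-model count yielding $e_r$, and the observation that simple-graph corrections and non-cubic kernels only affect the $(1-T)^{-(3r-1)}$ term---so there is nothing substantive to compare.
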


        Since any graph can be represented as a set of unrooted trees, unicycles
        and a complex component (whose excess is denoted by \( k \) below) the EGF of graphs is equal to
        \begin{equation}
        \label{th:graph_decomposition}
          \eG(z,w) =
          e^{U(z w) / w}
          e^{V(z w)}
          \sum_{k \geq 0}
          \eComplex_k(zw) w^k.
        \end{equation}

    \end{subsection}

    \begin{subsection}{Exact expression for directed acyclic graphs}
        In order to obtain the asymptotic number of DAGs, we need a
        decomposition different from~\eqref{eq:graphs:dags}.
        For comparison, in the expression~\eqref{th:graph_decomposition} the
        first summand is asymptotically dominating in the case of subcritical
        graphs. Inside the critical window, all the summands
        of~\eqref{th:graph_decomposition} give a contribution of the
        same asymptotic order.

        \begin{lemma} \label{th:dags_new}
        The number $\fdag_{n,m}$ of $(n,m)$-DAGs is equal to
        \begin{multline*}
            n!^2 \sum_{t \geq 0}
            [z_0^n z_1^n]
            \frac{(U(z_0) + U(z_1))^{2n-m+t}}{(2n-m+t)!}
            \frac{e^{U(z_1) + V(z_0)}}{e^{ V(z_1)}}
            [y^t]
            \frac{
                \sum_{j \geq 0} \eComplex_j(z_0) y^j
            }{
                \sum\limits_{k \geq 0} \eComplex_k(z_1)
                \left(- \frac{y}{1+y} \right)^k
            }
          \frac{
              1
          }{(1+y)^n}.
        \end{multline*}
        \end{lemma}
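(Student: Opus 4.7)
Starting from Robinson's identity~\eqref{th:dags_exact}, my plan is to \emph{symmetrise} the $(1+w)^{\binom{n}{2}}$ factor by writing it as $n!\,[z_0^n]\,\eG(z_0,w)$; since the denominator $\eG(-z,-w/(1+w))$ is independent of $z_0$, the coefficient extractions commute, yielding
\[
\fdag_{n,m} = n!^2\,[z_0^n\,z^n\,w^m]\,\frac{\eG(z_0,w)}{\eG(-z,-w/(1+w))}.
\]
This bivariate form is what will eventually produce the two tree families $U(z_0),U(z_1)$ in the statement.

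The next step is to apply the decomposition~\eqref{th:graph_decomposition} and to notice that
\[
\eG(z_0,w) = F(z_0 w,\,w),\qquad \eG(-z,-w/(1+w)) = G\!\bigl(\tfrac{zw}{1+w},\,w\bigr),
\]
where $F(x,w) = e^{U(x)/w+V(x)}\sum_j\eComplex_j(x)w^j$ and $G(x,w) = e^{-(1+w)U(x)/w+V(x)}\sum_k\eComplex_k(x)(-w/(1+w))^k$. Although $F$ and $1/G$ are only Laurent in $w$ individually, their compositions are genuine formal power series in $(z_0,w)$ and $(z,w)$. Expanding $F(z_0 w,w) = \sum_a F_a(w)\,z_0^a w^a$ and analogously $1/G(zw/(1+w),w) = \sum_b \widetilde G_b(w)\,z^b w^b(1+w)^{-b}$, the extractions $[z_0^n]$ and $[z^n]$ pick out the diagonal terms and release a $w^{2n}/(1+w)^n$ prefactor, reducing the problem to
\[
\fdag_{n,m} = n!^2\,[w^{m-2n}]\,(1+w)^{-n}\,[x_0^n x_1^n]\,\frac{F(x_0,w)}{G(x_1,w)}
\]
in fresh dummies $x_0,x_1$.

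Finally, using $(1+w)U(x_1)/w = U(x_1)/w + U(x_1)$ I regroup all the $1/w$ content into a single factor:
\[
\frac{F(x_0,w)}{G(x_1,w)} = e^{(U(x_0)+U(x_1))/w}\cdot\frac{e^{U(x_1)+V(x_0)-V(x_1)}\sum_j\eComplex_j(x_0)w^j}{\sum_k\eComplex_k(x_1)(-w/(1+w))^k},
\]
the second factor being a genuine formal power series in $w$ (its value at $w=0$ is $e^{U(x_1)+V(x_0)-V(x_1)}$ since $\eComplex_0 = 1$). Taylor-expanding $e^{(U(x_0)+U(x_1))/w} = \sum_{s\ge 0}(U(x_0)+U(x_1))^s/(s!\,w^s)$ and performing the $[w^{m-2n}]$ extraction termwise, the substitution $t = m-2n+s$ recasts the $s$-sum as a $t$-sum with prefactor $(U(x_0)+U(x_1))^{2n-m+t}/(2n-m+t)!$ and with $[w^t]$ applied to the remaining power series, matching the statement after renaming $x_i\mapsto z_i,\,w\mapsto y$. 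The one subtle step is the Laurent-vs.-formal bookkeeping: one must verify that the $w^n$ prefactors released by the two inner extractions exactly absorb the $1/w^n$ Laurent parts of $F_n(w)$ and $\widetilde G_n(w)$, which is what guarantees that every subsequent coefficient extraction is finite.
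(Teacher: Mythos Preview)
Your proof is correct and follows essentially the same route as the paper's: symmetrise $(1+w)^{\binom{n}{2}}$ as $n!\,[z_0^n]\eG(z_0,w)$, inject the decomposition~\eqref{th:graph_decomposition} into numerator and denominator, then reduce to $[y^{m-2n}]$ of an expression involving $e^{(U(z_0)+U(z_1))/y}$ whose Taylor expansion produces the $t$-sum. The only cosmetic difference is that the paper packages your ``release a $w^{2n}/(1+w)^n$ prefactor'' step as the single change of variables $(z_0,z_1,w)\mapsto(z_0/y,\tfrac{1+y}{y}z_1,y)$, whereas you spell it out as extracting $[z_0^n],[z^n]$ from $F(z_0w,w)$ and $1/G(zw/(1+w),w)$; these are the same manipulation, and your explicit bookkeeping of the Laurent-vs.-formal issue is a welcome clarification of a point the paper leaves implicit.
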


        \begin{proof}
        Since \( (1 + w)^{ n \choose 2} \) is the generating function of
        graphs with \( n \) vertices, we can replace
        $(1+w)^{\binom{n}{2}}$ with $n! [z^n] \eG(z,w)$
            in~\eqref{th:dags_exact}.
        Injecting the expression of $\eG(z,w)$
        from~\eqref{th:graph_decomposition}
        in the resulting formula with \( z \mapsto -z_1 \) and
        \( w \mapsto -\frac{w}{1 + w} \),
        we obtain (see also Remark~\ref{remark:two:graphs} for more intuitions)
        \[
            \fdag_{n,m} =
            n!^2 [z_0^n z_1^n w^m]
            \frac{
                e^{U(z_0 w)/w + V(z_0 w)} \sum_{j \geq 0} \eComplex_j(z_0 w) w^j
            }{
                e^{- U(\frac{z_1 w}{1+w}) \frac{1+w}{w}
                + V(\frac{z_1 w}{w+1})}
                \sum_{k \geq 0} \eComplex_k
                \left(
                    \frac{z_1 w}{1+w}
                \right)
                \left(
                    - \frac{w}{1+w}
                \right)^k
            }.
        \]
        The change of variables $(z_0, z_1, w) \mapsto \left( \frac{z_0}{y},
        \frac{1+y}{y} z_1, y \right)$ are applied, which results in
        \[
            \fdag_{n,m} =
            n!^2 [z_0^n z_1^n y^{m-2n}]
            e^{(U(z_0) + U(z_1))/y}
            e^{U(z_1)}
            \frac{\sum_{j \geq 0} \eComplex_j(z_0) y^j}
              {\sum_{k \geq 0} \eComplex_k(z_1) \left(- \frac{y}{1+y} \right)^k}
            \frac{e^{V(z_0) - V(z_1)}}{(1+y)^n}.
        \]
        We finish the proof by extracting the coefficient $[y^{m-2n}]$.
    \end{proof}

    \begin{remark}
        \label{remark:two:graphs}
  The number of pairs $(G_0, G_1)$ of graphs,
  each on $n$ vertices, having a total of $m_0 + m_1 = m$ edges,
  is
  \(
    n!^2 [z_0^n z_1^n w^m] G(z_0, w) G(z_1, w).
  \)
  Working as in the previous proof leads to
  \[
    n!^2 \sum_{t \geq 0} [z_0^n z_1^n w^m]
    \frac{\left(U(z_0) + U(z_1)\right)^{2n-m+t}}{(2n-m+t)!}
    e^{V(z_0) + V(z_1)}
    \sum_{j \geq 0} \eComplex_j(z_0) y^j
    \sum_{k \geq 0} \eComplex_k(z_1) y^k.
  \]
  which looks and behaves (when $m/n$ stays smaller than
   or close to $1$) like the expression for $\fdag_{n,m}$ from the last lemma.
  This motivates the following intuition.
  Typically, those two graphs should share the $m$ edges more or less equally.
  Thus, when $m/n$ is close to $1$, $m_0/n$ and $m_1/n$ should be close to $1/2$,
  so $G_0$ and $G_1$ will exhibit critical graph structure. For a smaller ratio $m/n$,
  $G_0$ and $G_1$ will behave like subcritical graphs, containing only trees and unicycles.
  This heuristic explanation for the critical density for dags
  guides our analysis in the rest of the paper.
    \end{remark}

    \end{subsection}

    \begin{subsection}{Exact expression for elementary digraphs}
        As we discovered in our previous paper~\cite{de2019symbolic}, and which
        was also pointed in a different form in~\cite{robinson1973counting},
        the graphic generating function of the
        family of digraphs whose connected components belong to a given set \(
        \family{S} \) with the EGF \( S(z, w) \), is
        given by
        \begin{equation}
            \label{eq:subcritical:spec}
            \graphic{E}(z, w) = \dfrac{1}{e^{-\eS(z, w)} \hadamard_z \gset(z,
            w)}, \quad \text{where} \quad
            \gset(z, w) = \sum_{n \geq 0} \dfrac{z^n}{n! (1 + w)^{n \choose 2}},
        \end{equation}
        and \( \hadamard_z \) is the exponential Hadamard product,
        characterized by $\sum_n a_n \frac{z^n}{n!} \hadamard_z \sum_n b_n \frac{z^n}{n!} = \sum_n a_n b_n \frac{z^n}{n!}$. 
        \( \gset(z, w) \) is the GGF of sets of
        isolated vertices.
        In particular, for the case of \emph{elementary digraphs}, \ie the
        digraphs whose strong components are isolated vertices or cycles of
        length \( \geq 2 \) only, the EGF of \( \family{S} \) is given by
        \[
            \eS(z, w) = z + \ln \dfrac{1}{1 - zw} - zw.
        \]
        In order to expand the Hadamard product, we develop the exponent \(
        e^{-\eS(z, w)} \) and apply the simplification rule
        \( az e^{az} \hadamard_z F(z) = \left.z \frac{d}{dz}F(z)\right|_{z
        \mapsto az} \).
        After developing the exponent
        and expanding the Hadamard product
        we obtain a very simple expression, namely
        \begin{equation}
            \label{eq:subcritical:digraphs}
            \graphic{E}(z, w)
            = \dfrac{1}{(1 - zw)e^{-z(1 - w)} \hadamard_z \gset(z, w)}
            =
            \left.
            \dfrac{1}{
                \gset(-z, w)
                + z\frac{w}{1 - w}
                \frac{d}{dz} \gset(-z, w)
            }
            \right|_{z\ \mapsto (1 - w)z}.
        \end{equation}

        The following lemma is a heavier version of this expression. One of the
        reasons behind its visual complexity is the
        choice of the simple digraphs instead of multidigraphs; however, during
        the asymptotic analysis, most of the decorations corresponding to simple
        digraphs are going to disappear.

        \begin{lemma} The number \( \family{ED}_{n,m} \) of \( (n,m) \) elementary
            digraphs is equal to
            \label{lemma:number:subcritical:dags}
            \begin{multline*}
                \family{ED}_{n,m}
                = n!^2
                \sum_{t \geq 0}
                [z_0^n z_1^n]
                \dfrac{
                    (
                    U(z_0) + U(z_1)
                    )^{2n -m + t}
                }{
                    (2n - m + t)!
                }
                [y^t]
                e^{\frac{2}{1 - y} U(z_1) + V(z_0) - V(z_1)}
                \left(
                    \dfrac{1 - y}{1 + y}
                \right)^n
                \\
                \times
                \dfrac{\sum_{j \geq 0} \eComplex_j(z_0) y^j}
                {\sum\limits_{k \geq 0}
                  \left[
                      \eComplex_k(z_1) \left(
                          1
                          - \frac{1 + y}{1 - y}
                          (
                            T(z_1) - V^\bullet(z_1)
                          )
                      \right)
                      +
                      \frac{1 + y}{1 - y}
                      \eComplex^\bullet_k(z_1)
                  \right]
                  \left(- y\frac{1-y}{1+y} \right)^k}
                  ,
            \end{multline*}
            where
            \(
                \eComplex^\bullet_r(z) = z \frac{d}{dz}
                \eComplex^\bullet_r(z)
            \) and \(
                V^\bullet(z) = z \frac{d}{dz}V(z).
            \)
        \end{lemma}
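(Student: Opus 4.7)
The plan is to follow the template of \cref{th:dags_new}, replacing the acyclic generating function by the one for elementary digraphs from \eqref{eq:subcritical:digraphs} and carefully handling the extra derivative term. I would first observe that, by the GGF convention, $\family{ED}_{n,m}=n![z^n w^m](1+w)^{\binom{n}{2}}\graphic{E}(z,w)$, exactly as for DAGs. Substituting $(1+w)^{\binom{n}{2}}=n![z_0^n]\eG(z_0,w)$ introduces the auxiliary variable $z_0$, and after renaming the original $z$ to $z_1$ one obtains
\[
\family{ED}_{n,m}=n!^2[z_0^n z_1^n w^m]\frac{\eG(z_0,w)}{\gset(-(1-w)z_1,w)-w z_1\,\gset'(-(1-w)z_1,w)}.
\]
The elementary case differs from the DAG case precisely by the subtracted derivative term $-w z_1\gset'$, which is what will eventually contribute the $V^\bullet(z_1)$ and $\eComplex^\bullet_k(z_1)$ pieces of the final bracket.

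Next, using the direct identity $\gset(u,w)=\eG(u,-w/(1+w))$, rewrite the denominator in terms of $\eG$ and its first-argument derivative, and apply the decomposition \eqref{th:graph_decomposition}. With $t=(1-w)w z_1/(1+w)$ and $\hat y=-w/(1+w)$, the factorization $\eG(-(1-w)z_1,\hat y)=e^{U(t)/\hat y+V(t)}\sum_k\eComplex_k(t)\hat y^k$ enters both the numerator (evaluated at $z_0,w$) and the denominator. The derivative is handled by the chain rule, and the bullet identities $t U'(t)=T(t)$, $t V'(t)=V^\bullet(t)$, $t\eComplex_k'(t)=\eComplex^\bullet_k(t)$ let the result be expressed entirely with $T(z_1)$, $V^\bullet(z_1)$, and $\eComplex^\bullet_k(z_1)$ after the substitution in the next step.

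Then I would apply the change of variables $(z_0,z_1,w)\mapsto(z_0/y,\,(1+y)z_1/((1-y)y),\,y)$: this sends $z_0 w\mapsto z_0$ and the internal argument $t\mapsto z_1$, so all tree, unicycle and complex functions become evaluated at $z_0$ or $z_1$. Mimicking the coefficient computation of the DAG proof, one checks that
\[
[z_0^n z_1^n w^m]F(z_0,z_1,w)=[z_0^n z_1^n y^{m-2n}]\,F\bigl(z_0/y,\,(1+y)z_1/((1-y)y),\,y\bigr)\cdot\left(\frac{1-y}{1+y}\right)^n,
\]
which is the origin of the factor $((1-y)/(1+y))^n$ appearing in the statement. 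Finally, factor $e^{(U(z_0)+U(z_1))/y}$ out of the transformed expression; expanding it as $\sum_l(U(z_0)+U(z_1))^l y^{-l}/l!$ and convolving with the (regular at $y=0$) remainder, indexed by $t$, yields the infinite sum, with $(U(z_0)+U(z_1))^{2n-m+t}/(2n-m+t)!$ coming from $[y^{-(2n-m+t)}]$ of the exponential and $[y^t]$ from the remainder.

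The main obstacle is the bookkeeping in steps two and three. The chain-rule expansion of $\eG_1$ at $x=-(1-w)z_1$ and $\hat y=-w/(1+w)$, composed with the change of variables, produces many intertwined factors in $w/(1-w)$, $(1+w)/w$ and $\hat y$, and collecting them into the targeted form requires several reorganizations. In particular, the appearance of $\exp(\tfrac{2}{1-y}U(z_1))$ rather than a naive $\exp(U(z_1))$ in the exponent, and the coefficients $\tfrac{1+y}{1-y}$ in front of both the $V^\bullet(z_1)$ term and $\eComplex^\bullet_k(z_1)$ in the bracket, must be obtained by moving an $\exp(\tfrac{1+y}{1-y}U(z_1))$ factor between the exponential prefactor and the denominator sum, using the identity $\tfrac{2}{1-y}=1+\tfrac{1+y}{1-y}$ and the corresponding reorganization of the $\hat y^k$ series into $(\hat y(1-y))^k$. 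Verifying that this rearrangement indeed produces the exact bracket displayed in the lemma is the most delicate step; the remaining algebra -- matching powers of $(1-y), (1+y)$, and $y$, and extracting the factor $((1-y)/(1+y))^n$ from the variable change -- is then routine.
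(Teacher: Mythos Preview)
Your proposal is correct and follows essentially the same route as the paper: represent $(1+w)^{\binom{n}{2}}$ via $n![z_0^n]\eG(z_0,w)$, rewrite the denominator of~\eqref{eq:subcritical:digraphs} through $\gset(-z,w)=\eG(-z,-w/(1+w))$ and the decomposition~\eqref{th:graph_decomposition}, apply the change of variables $(z_0,z_1,w)\mapsto(z_0/y,\tfrac{1+y}{1-y}\tfrac{z_1}{y},y)$, and finally expand $e^{(U(z_0)+U(z_1))/y}$ to extract $[y^{m-2n}]$. Your account is in fact more explicit than the paper's on the algebraic bookkeeping (e.g.\ the identity $\tfrac{2}{1-y}=1+\tfrac{1+y}{1-y}$ explaining the exponent $e^{\frac{2}{1-y}U(z_1)}$), but the underlying argument is identical.
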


        \begin{proof}
            Let us denote \( v = \frac{w}{1 + w} \).
            Using the already mentioned representation
            \[
                \gset(-z, w)
                = \eG\left(-z, -\frac{w}{1 + w}\right)
                = e^{-U(zv) / v
                    + V(zv)}
                    \sum_{r \geq 0}\eComplex_r(zv)
                    \left(
                        -v
                    \right)^r,
            \]
            and by replacing \( (1 + w)^{n \choose 2} \) with 
the generating
            function of graphs with \( n \) vertices as in the proof
            of~\cref{th:dags_new}, we can write the denominator
            of~\eqref{eq:subcritical:digraphs} prior to substitution \( z
            \mapsto (1 - w)z \) as
            \begin{multline*}
                \gset(-z, w) + z \dfrac{w}{1 - w} \dfrac{d}{dz}
                \gset(-z, w)
                =
                e^{-U(zv) / v + V(zv)}
                \left[
                    \sum_{r \geq 0} \eComplex_r(zv) (-v)^r
                    \right.
                    \\
                    \left.
                    -
                    \dfrac{1 + w}{1 - w}
                    \left(
                    \left[
                    T(zv)
                    -
                    V^{\bullet}(zv)
                    \right]
                    \sum_{r \geq 0} \eComplex_r(zv) (-v)^r
                    +
                    \sum_{r \geq 0} \eComplex^\bullet_r(zv) (-v)^r
                    \right)
                \right],
            \end{multline*}
            Next, the change of variables
            \( (z_0, z_1, w) \mapsto \left(
                \dfrac{z_0}{y},
                \dfrac{1 + y}{1 - y} \dfrac{z_1}{y},
                y
            \right)\) yields
            \begin{multline*}
                \family{ED}_{n,m}
                = n!^2
                [z_0^n z_1^n y^{m-2n}]
                e^{ (U(z_0) + U(z_1))/y}
                e^{\frac{2}{1 - y} U(z_1) + V(z_0) - V(z_1)}
                \left(
                    \dfrac{1 - y}{1 + y}
                \right)^n
                \\
                \times
                \dfrac{\sum_{j \geq 0} \eComplex_j(z_0) y^j}
                {\sum\limits_{k \geq 0}
                  \left[
                      \eComplex_k(z_1) \left(
                          1
                          - \frac{1 + y}{1 - y}
                          (
                            T(z_1) - V^\bullet(z_1)
                          )
                      \right)
                      +
                      \frac{1 + y}{1 - y}
                      \eComplex^\bullet_k(z_1)
                  \right]
                  \left(- y\frac{1-y}{1+y} \right)^k}
                  .
            \end{multline*}
            The proof is finished by extracting the coefficient \( [y^{m - 2n}]
            \).
        \end{proof}

    \end{subsection}
\end{section}

\begin{section}{Asymptotic analysis}
    \label{section:asymptotic:analysis}
    \begin{subsection}{Bivariate semi-large powers lemma}

        The typical structure of critical random graphs
        can be obtained by application of the \emph{semi-large powers Theorem}
        \cite[Theorem~IX.16, Case (ii)]{flajolet2009analytic}.
        Since DAGs behave like a superposition of two graphs
        (see~\cref{remark:two:graphs}),
        we design a bivariate variant of this theorem.

        \begin{lemma}
        \label{lemma:bdg} 
        Consider two integers $n$ and $m$ going to infinity,
        such that $m = n (1 + \mu n^{-1/3})$
        with $\mu$ either staying in a bounded real interval, or \( \mu \to
        -\infty \) while \(\liminf\limits_{n \to \infty} m/n > 0\); let the function $F(z_0, z_1)$ be analytic on the open torus of
        radii $(1, 1)$
        \( \{ z_0, z_1 \in \mathbb C \colon |z_0| < 1, |z_1| < 1 \} \)
        and continuous on its closure,
        and let $r_0$ and $r_1$ be two real values,
        then the following asymptotics holds as \( n \to \infty \)
        \begin{multline}
            \label{eq:bivariate:sp:lemma}
            [z_0^n z_1^n] \left( U(z_0) + U(z_1) \right)^{2n-m}
            \frac{F(T(z_0), T(z_1))}{(1-T(z_0))^{r_0} (1-T(z_1))^{r_1}}
            \\ \sim\
            \frac{e^{2n}}{4}
            \left( \frac{3}{n} \right)^{(4 - r_0 - r_1)/3}
            F\left(\dfrac{m}{n}, \dfrac{m}{n} \right)
            H\left(\frac{3}{2}, \frac{r_0}{2}, - \frac{3^{2/3}}{2} \mu \right)
            H\left(\frac{3}{2}, \frac{r_1}{2}, - \frac{3^{2/3}}{2} \mu \right),
        \end{multline}
        where the function $H(\lambda, r, x)$ is defined as
        \(
            \frac{1}{\lambda}
            \sum_{k \geq 0}
            \Gamma \left( \frac{\lambda + r - k - 1}{\lambda} \right)^{-1}
            \frac{(-x)^k}{k!}.
        \)
        \end{lemma}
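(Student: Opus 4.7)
The plan is to represent the coefficient extraction as a double Cauchy integral, apply the classical tree substitution $z_i = T_i e^{-T_i}$ to work in the $T$-variables, and perform a two-dimensional saddle-point analysis near $T_0 = T_1 = 1$. The whole argument hinges on the observation that, in the rescaled coordinates, the exponent separates additively between $T_0$ and $T_1$, making the bivariate integral reducible to a product of two univariate Hankel integrals of the type handled by the univariate semi-large powers lemma of~\cite{banderier2001random}.

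By Cauchy's formula, the left-hand side of~\eqref{eq:bivariate:sp:lemma} equals
\[
\frac{1}{(2\pi i)^2}\oint\oint\frac{(U(z_0)+U(z_1))^{2n-m} F(T(z_0),T(z_1))}{z_0^{n+1}z_1^{n+1}(1-T(z_0))^{r_0}(1-T(z_1))^{r_1}}\,dz_0\,dz_1,
\]
over small circles around the origin. Using $T(z_i)=T_i$, $U(z_i)=T_i-T_i^2/2$, and $dz_i/z_i^{n+1} = (1-T_i)\,T_i^{-(n+1)}e^{nT_i}\,dT_i$, this becomes an integral on the $T$-torus. We then deform each $T_i$-contour to a Hankel-type contour passing at distance $\Theta(n^{-1/3})$ from $T_i=1$, and introduce local variables $T_i = 1 - s_i n^{-1/3}$. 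A direct Taylor expansion, together with the identity $U_0 + U_1 = 1 - (1-T_0)^2/2 - (1-T_1)^2/2$, yields
\[
(2n-m)\log(U_0+U_1) + n(T_0+T_1) - (n+1)\log(T_0T_1) = 2n + \frac{\mu}{2}(s_0^2+s_1^2) + \frac{1}{3}(s_0^3+s_1^3) + \smallo(1),
\]
uniformly on the rescaled contour. The absence of any $s_0 s_1$ cross-term is the decisive decoupling step. Since $F$ is continuous on the closed bi-disk, it may be replaced on the relevant neighborhood by its value at the saddle, which we take to be $F(m/n,m/n)$ in order to absorb the leading correction (consistent with $m/n\to 1$).

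Assembling the pieces, using $(1-T_i)^{1-r_i} = n^{(r_i-1)/3}\, s_i^{1-r_i}$ and $dT_i = -n^{-1/3}\,ds_i$, the integral factors at leading order as
\[
\frac{e^{2n}}{(2\pi i)^2} F\!\left(\frac{m}{n},\frac{m}{n}\right) n^{-(4-r_0-r_1)/3} \prod_{i=0,1} \int_{\mathcal{H}} s_i^{1-r_i} \exp\!\left(\frac{\mu}{2}s_i^2 + \frac{s_i^3}{3}\right) ds_i,
\]
over a standard Hankel contour $\mathcal{H}$. Each univariate integral is evaluated by the rescaling $s_i = 3^{1/3}u_i$, Taylor expansion of the $\mu$-dependent factor, and the Hankel representation $\Gamma(\alpha)^{-1} = (2\pi i)^{-1}\int_{\mathcal{H}} e^t t^{-\alpha}\,dt$. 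This reproduces the series defining $H(3/2, r_i/2, -3^{2/3}\mu/2)$, and the remaining numerical constants combine to form the prefactor $e^{2n}/4 \cdot (3/n)^{(4-r_0-r_1)/3}$ in the statement.

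The main obstacle will be to justify the error terms uniformly throughout the critical window, and in particular in the regime $\mu\to-\infty$. Two estimates are needed: first, a uniform bound on the higher-order terms in the Taylor expansion of the exponent, confirming that residual coupling between $s_0$ and $s_1$ (coming from the quartic and higher remainders of $\log(U_0+U_1)$ and $\log(T_0 T_1)$) is negligible on the rescaled contour; and second, a tail bound showing that the portion of each Hankel contour outside a ball of radius $O(|\mu|^{1/2})$ in the $s$-coordinates contributes only lower-order terms. For $\mu\to-\infty$ the contour must be rescaled by $|\mu|^{1/2}$ to follow the shifting saddle, and the hypothesis $\liminf m/n > 0$ is precisely what prevents the saddle from escaping the region where the tree change of variables and the Hankel estimates remain valid.
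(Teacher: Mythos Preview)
Your proof is correct and follows essentially the same route as the paper's: Cauchy integral, local rescaling at the tree singularity, observation that the exponent decouples additively in the two variables, and termwise Hankel evaluation of the resulting univariate integrals. The only cosmetic difference is that you first pass to the $T$-variables via $z=Te^{-T}$ and exploit the exact identity $U_0+U_1=1-\tfrac12(1-T_0)^2-\tfrac12(1-T_1)^2$, whereas the paper stays in the $z$-variable, uses the Puiseux expansions of $T$ and $U$ at $z=e^{-1}$, and argues the absence of cross-terms from the exponent set $\{0,1,\tfrac32\}$; your formulation makes the decoupling a bit more transparent, but the two arguments are equivalent under the change of variables $\alpha_i = s_i^2/2$.
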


        \begin{remark}
        \label{remark:H}
        A direct computation shows that \( H(\cdot, \cdot,
        \cdot) \) from~\eqref{eq:bivariate:sp:lemma} can be expressed as
        \[
            H\left(
                \dfrac32, r,
                - \dfrac{3^{2/3}}{2} \mu
            \right)
            =
            \dfrac{2}{3}
            e^{\mu^3/6} 3^{(2r+1)/3}
            A(2r, \mu),
        \]
        where the function \( A(y, \mu) \) is defined in~\cite{janson1993birth}
        as
        \[
            A(y, \mu) = \dfrac{e^{-\mu^3/6}}{3^{(y+1)/3}}
            \sum_{k \geq 0}
            \dfrac{
                \big(
                    \frac12 3^{2/3} \mu
                \big)^k
            }{
                k! \Gamma\big(
                    \frac{y+1 - 2k}{3}
                \big)
            }
            \text{\ and satisfies\ }
            \lim_{\mu \to -\infty} A(y, \mu) |\mu|^{y - 1/2}
            = \dfrac{1}{\sqrt{2 \pi}}
            .
        \]
        We provide only the proof of the harder case when \( \mu \) is bounded.
        In order to adapt the proof of~\cref{lemma:bdg} to the case \( \mu \to
        -\infty \), a simpler saddle-point bound can be used.
        \end{remark}

        \begin{proof}[Proof of~\cref{lemma:bdg}]
            The first step is to represent the coefficient extraction operation
            from~\eqref{eq:bivariate:sp:lemma} as a double complex integral,
            using Cauchy formula, and to approximate this double integral with a
            product of two complex integrals.
            We start with the Puiseux expansion of the EGF of rooted labeled
            trees \( T(z) \) and unrooted labeled trees \( U(z) = T(z) -
            \frac{T^2(z)}{2}\):
            \begin{align}
              \label{eq:newton:puiseux:T}
              T(z) &= 1 - \sqrt{2} \sqrt{1 - e z}
              + \frac{2}{3} (1 - e z) + \bigO(1 - e z)^{3/2}, \\
              U(z) &= \frac{1}{2} - (1 - e z)
              + \frac{2^{3/2}}{3} (1 - e z)^{3/2} + \bigO(1 - e z)^2.
              \label{eq:newton:puiseux:U}
          \end{align}
            Applying Cauchy's integral theorem, we rewrite the coefficient
            extraction~\eqref{eq:bivariate:sp:lemma} in the form
            \[
                \frac{1}{(2 i \pi)^2}
                \oint \oint
                \left(
                    U(z_0) + U(z_1)
                \right)^{2n-m}
                \frac
                    {F(T(z_0), T(z_1))}
                    {(1-T(z_0))^{r_0} (1-T(z_1))^{r_1}}
                \frac{d z_0}{z_0^{n+1}} \frac{d z_1}{z_1^{n+1}}.
            \]
            \begin{table}[H]
            \begin{minipage}{0.6\textwidth}
            In order to accomplish the separation of the integrals, we represent
            the term
            \(
                \left(
                    U(z_0) + U(z_1)
                \right)^{2n-m}
            \)
            as the exponent of the logarithm, and evaluate the leading terms in
            Newton--Puiseux expansion of the logarithm
            \[
                \left(
                    U(z_0) + U(z_1)
                \right)^{2n-m}
                =
                e^{
                    (2n-m) \log \left(
                        U(z_0) + U(z_1)
                    \right)
                }.
            \]
            By plugging the leading terms of
            \( U(z_0) \) and \( U(z_1) \)
            from~\eqref{eq:newton:puiseux:U}
            into the previous expression
            and developing the logarithm around \( z_0 = z_1 = e^{-1} \),
            we notice that the leading powers of \( (1 - ez_0) \) and
            \( (1 - ez_1) \)
            contain only the exponents \( \{0, 1, \frac{3}{2}\} \), and thus,
            asymptotically, no products need to be taken into account,
            see~\cref{table:product:form}.
            \end{minipage}\quad\quad
            \begin{minipage}{0.35\textwidth}
                \centering
                \begin{tabular}{cccc}
                    & 0 & 1 & \( \frac{3}{2} \vphantom{\Big|} \) \\
                    \hline
                    \( 0 \vphantom{\Big|} \) & \checkmark
                            & \checkmark & \checkmark \\
                    \( 1 \vphantom{\Big|} \)
                            & \checkmark & -- & -- \\
                    \( \frac{3}{2} \vphantom{\Big|} \)
                            & \checkmark & -- & -- \\
                    \hline
                \end{tabular}
                \caption{Contributing exponents of \( (1 - e z_0) \) and \( (1 -
                    ez_1) \) for bivariate semi-large powers
                lemma}
                \label{table:product:form}
            \end{minipage}%
            \end{table}

            A further step is to inject \( m = n + \mu n^{2/3} \), 
            \( 1 - ez_0 = \alpha_0 n^{-2/3} \), and \( 1 - ez_1 = \alpha_1
            n^{-2/3} \), where \( \alpha_0, \alpha_1 \in \mathbb C \).
            By using expansion~\eqref{eq:newton:puiseux:T} in order to
            approximate the terms \( (1 - T(z_0)) \) and \( (1 - T(z_1)) \),
            we rewrite the answer in the form
            \begin{multline*}
                \left( \frac{n}{2^{3/2}} \right)^{(r_0 + r_1 - 4)/3}
                \frac{e^{2n}}{4 (2 i \pi)^2}
                F\left(1,1\right) \times \\
                \oint \oint
                e^{
                  \mu (\alpha_0 + \alpha_1)
                  + \frac{2^{3/2}}{3} (\alpha_0^{3/2} + \alpha_1^{3/2})
                  + \bigO \left( n^{-1/3} \right)
                  }
                \left(1 + \bigO(n^{-1/3}) \right)
                \frac{d \alpha_0}{\alpha_0^{r_0/2}}
                \frac{d \alpha_1}{\alpha_1^{r_1/2}}.
            \end{multline*}
            After removal of the negligible terms,
            a product of integrals is obtained
            \[
              F(1, 1)
              \left( \frac{n}{2^{3/2}} \right)^{(r_0 + r_1 - 4)/3}
              \frac{e^{2n}}{4}
              \frac{1}{2 i \pi} \oint
              e^{\mu \alpha_0 + \frac{2^{3/2}}{3} \alpha_0^{3/2}}
              \frac{d \alpha_0}{\alpha_0^{r_0/2}}
              \times
              \frac{1}{2 i \pi} \oint
              e^{\mu \alpha_1 + \frac{2^{3/2}}{3} \alpha_1^{3/2}}
              \frac{d \alpha_1}{\alpha_1^{r_1/2}}.
            \]
            Each of the integrals can be evaluated similarly as
            in~\cite[Theorem~IX.16, Case (ii)]{flajolet2009analytic}: in order
            to evaluate such integral, a variable change
            \( u = - \frac{2^{3/2}}{3} \alpha^{3/2} \) is applied, and the
            integral is expressed as an infinite sum using a Hankel contour
            formula for the Gamma function:
            \[
                \frac{1}{2 i \pi} \oint
                e^{\frac{3^{2/3}}{2} \mu u^{2/3}}
                e^{- u}
                \left( \frac{3^{2/3} u^{2/3}}{2} \right)^{-\frac{1+r}{2}}
                \frac{du}{\sqrt{2}}
                =
                \frac{2^{r/2}}{3^{(1+r)/3}}
                \sum_{k \geq 0}
                \frac{ \left( \frac{3^{2/3} \mu}{2} \right)^k}{k!}
                \frac{1}{2 i \pi} \oint
                u^{\frac{2 k - 1 - r}{3}}
                e^{-u} du.
            \]
        \end{proof}
    \end{subsection}

    \begin{subsection}{Asymptotic analysis of directed acyclic graphs}
        Since we are going to apply~\cref{lemma:bdg} to each of the terms
        of the infinite sum of~\cref{th:dags_new}, it is useful to introduce the
        following notation
        \[
            s_r^+(\mu) = H\left(
                \dfrac32, \dfrac{3r}{2} + \dfrac{1}{4}
                , - \dfrac{3^{2/3}}{2} \mu
            \right),
            \quad
            s_r^-(\mu) = H\left( 
                \dfrac32, \dfrac{3r}{2} - \dfrac{1}{4}
                , - \dfrac{3^{2/3}}{2} \mu
            \right);
        \]
        \[
            S^+(y; \mu) = \sum_{r \geq 0} s_r^+(\mu) y^r,
            \ \
            S^-(y; \mu) = \sum_{r \geq 0} s_r^-(\mu) y^r,
            \ \
            E(y) = \sum_{r \geq 0} e_r y^r,
            \ \
            e_r^{(-1)} = [y^r] \dfrac{1}{E(-y)},
        \]
        where \( e_r \) is given by~\cref{th:complex}.
        This notation will be used throughout the next two sections.

        \begin{theorem}
            \label{theorem:asymptotics:dags}
            When \( m = n(1 + \mu n^{-1/3}) \) and \( \mu \) either stays in a
            bounded real interval, or \( \mu \to -\infty \)
            while \(\liminf m/n > 0 \) as \( n \to \infty \),
            \[
                \mathbb P( (n,m)\text{-digraph is acyclic})
                \sim
                \frac{3^{5/6}}{2}
                \frac{e^{\frac{m}{n} - \frac{\mu^3}{6}} \sqrt{2 \pi}}{n^{1/3}}
                \sum_{q \geq 0}
                3^{-q} e^{(-1)}_q
                s_q^-(\mu).
            \]
            In particular, for the sparse case where $\mu \to -\infty$
            (which covers $\limsup m/n < 1$),
            \[
                \mathbb P( (n,m)\text{-digraph is acyclic})
                \sim
                e^{m/n} (1 - m/n).
            \]
        \end{theorem}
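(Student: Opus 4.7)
The plan is to combine the exact expression for $\fdag_{n,m}$ from Lemma~\ref{th:dags_new} with the bivariate semi-large powers asymptotic of Lemma~\ref{lemma:bdg}, and then normalise by $\binom{n(n-1)}{m}$ to obtain the probability.

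First, I would treat the infinite sum over $t \geq 0$ in Lemma~\ref{th:dags_new} term by term. For each $t$, I would expand the $[y^t]$ coefficient by replacing each complex-graph EGF with its leading Puiseux behaviour $\eComplex_r(z) \sim e_r (1-T(z))^{-3r}$ from Proposition~\ref{th:complex}, using the identity $1/E(-v) = \sum_q e_q^{(-1)} v^q$ to invert the denominator, and absorbing the contribution of $e^{V(z_0)-V(z_1)}$ (which supplies the factor $(1-T(z_0))^{-1/2}(1-T(z_1))^{+1/2}$ via $V(z) \sim -\tfrac12\log(1-T(z)) - \tfrac34$) and of $e^{U(z_1)}$ (the constant $e^{1/2}$ at the saddle). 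This recasts the $[y^t]$ extraction as a finite linear combination of terms of the form required by Lemma~\ref{lemma:bdg} with $r_0 = 3j + 1/2$ and $r_1 = 3q - 1/2$, for which the $H$-values are exactly $s_j^+(\mu)$ and $s_q^-(\mu)$.

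Next, I would apply Lemma~\ref{lemma:bdg} to each such term and then sum over $t$ together with the internal indices $(j, q, \ell)$, where $\ell$ is the contribution from expanding $(1+y)^{-n}$. The $\ell$-summation collapses via $\sum_\ell (-1)^\ell/\ell! = e^{-1}$ combined with the Stirling asymptotic $(2n - m + t)! \sim (2n - m)! \cdot n^{j+q+\ell}$; the resulting $n$-powers cancel against the $(3/n)^{(4-r_0-r_1)/3}$ factor of Lemma~\ref{lemma:bdg}, producing the clean weight $3^{-j-q}$. A combinatorial identification of the type encountered in the critical window of random graphs then absorbs the $j$-summation into the prefactor, leaving the single series $\sum_q 3^{-q} e_q^{(-1)} s_q^-(\mu)$. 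The external prefactor $\frac{3^{5/6}}{2}\frac{e^{m/n - \mu^3/6} \sqrt{2\pi}}{n^{1/3}}$ emerges once we divide by $\binom{n(n-1)}{m} \sim (n(n-1))^m / m!$, expanded through Stirling around $m = n + \mu n^{2/3}$, which produces the $e^{-\mu^3/6}\sqrt{2\pi}/n^{1/3}$ correction and the $e^{m/n}$.

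For the sparse regime $\mu \to -\infty$, I would invoke the asymptotic $A(y, \mu) \sim |\mu|^{-(y-1/2)}/\sqrt{2\pi}$ from Remark~\ref{remark:H}, which gives $s_q^-(\mu) = \bigO(|\mu|^{1-3q})$, so only the $q = 0$ term survives. Substituting $|\mu| = (1 - m/n) n^{1/3}$ and simplifying yields the explicit limit $e^{m/n}(1 - m/n)$. The main obstacle is the joint bookkeeping of all Stirling correction factors (so as to produce precisely the $e^{-\mu^3/6}\sqrt{2\pi}$) and, above all, the verification that the $j$-summation from the complex-graph numerator indeed telescopes into a constant absorbed by the prefactor rather than a second genuine series; this step either relies on a magic-formula identity relating the coefficients $e_j$ to the functions $s_j^+(\mu)$, or on a more refined simultaneous saddle-point argument on the triple integral in $(z_0, z_1, y)$ that bypasses the separation into numerator and denominator altogether.
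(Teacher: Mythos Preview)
Your proposal is correct and follows essentially the same route as the paper: expand the exact expression of Lemma~\ref{th:dags_new} using the leading term of $\eComplex_r$ from Proposition~\ref{th:complex}, apply Lemma~\ref{lemma:bdg} term by term with $r_0=3j+\tfrac12$, $r_1=3q-\tfrac12$, decouple the $(j,q,\ell)$ sums, and normalise by $\binom{n(n-1)}{m}$ via Stirling. The one step you flag as uncertain---that the $j$-sum $\sum_{j\geq 0}3^{-j}e_j\,s_j^+(\mu)$ collapses to a constant---is exactly the ``magic-formula identity'' option you mention: it equals $\sqrt{2/(3\pi)}\,e^{\mu^3/6}$, which the paper obtains by citing Remark~\ref{remark:H} and \cite[Section~14]{janson1993birth}.
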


        \begin{proof}
            In order to apply~\cref{lemma:bdg} (bivariate semi-large powers), we
            develop the coefficient operator \( [y^t] \) in~\cref{th:dags_new}
            using the approximation of \( \eComplex_k(T) \)
            from~\cref{th:complex}
            and drop the terms that give negligible contribution:
            \begin{multline*}
                [y^t]
                \frac{
                    \sum_{j \geq 0} \eComplex_j(z_0) y^j
                }{
                    \sum\limits_{k \geq 0} \eComplex_k(z_1)
                    \left(- \frac{y}{1+y} \right)^k
                }
                \dfrac{1}{(1 + y)^n}
                \\=
                \!\!\!\!
                \sum_{p + q + r = t}
                \frac{e_p T(z_0)^{2p}}{(1-T(z_0))^{3p}}
                \frac{e^{(-1)}_q T(z_1)^{2q}}{(1-T(z_1))^{3q}}
                [y^r] \frac{1}{(1+y)^n}
                (
                1 + \bigO(1 - T(z_0))
                ) (
                1 + \bigO(1 - T(z_1))
                ).
            \end{multline*}
            Then we apply~\cref{lemma:bdg} and the approximation $(2n-m+t)! \sim
            (2n-m)! n^t$ to obtain
            \begin{align*}
                \fdag_{n,m}
                &=
                n!^2
                \sum_{t \geq 0}
                [z_0^n z_1^n]
                \sum_{p + q + r = t}
                \frac{(U(z_0) + U(z_1))^{2n-m+t}}{(2n-m+t)!}
                e^{U(z_1)}
                \frac{e_p T(z_0)^{2p}}{(1-T(z_0))^{3p+1/2}}
                \\
                & \phantom{=\ } \times 
                \frac{e^{(-1)}_q T(z_1)^{2q}}{(1-T(z_1))^{3q-1/2}}
                [y^r] \frac{1}{(1+y)^n}
                ( 1 + \bigO(1-T(z_0)))
                ( 1 + \bigO(1-T(z_1)))
                \\
                &=
                \dfrac{n!^2}{(2n - m)!}
                \sum_{t \geq 0}
                \sum_{p + q + t = t}
                \dfrac{e^{m/2n}}{n^t}
                e_p e_q^{(-1)}
                \dfrac{e^{2n}}{4}
                \left( \frac{3}{n} \right)
                ^{(4 - (3p+1/2) - (3q-1/2))/3}
                \\
                &  \phantom{=\ } \times 
                s_p^+(\mu)
                s_q^-(\mu)
                [y^r] \dfrac{1}{(1 + y)^n}.
            \end{align*}
        The power of $n$ in the sum is $n^{-4/3-r}$, and the sum over $r$ of
        $n^{-r} [y^r] (1+y)^{-n}$ is equal to $(1+1/n)^{-n}$ and converges to
        $e^{-m/n}$.  Finally, the sums over $p$ and $q$ are decoupled and we
        obtain
        \[
            \fdag_{n,m} \sim
            \dfrac{n!^2}{(2n - m)!}
            \frac{e^{2n-m/2n}}{n^{4/3}}
            \frac{3^{4/3}}{4}
            \sum_{p \geq 0} 3^{-p} e_p s_p^+(\mu)
            \sum_{q \geq 0} 3^{-q} e_q^{(-1)} s_q^-(\mu).
        \]
        The sum over $p$ admits a closed expression $\sqrt{\dfrac{2}{3 \pi}}
        e^{\mu^3/6}$ (see~\cref{remark:H} and~\cite[Section 14]{janson1993birth}).
        Applying Stirling's formula, we can rescale the asymptotic number of
        DAGs by the total number of digraphs:
        \[
          \fdag_{n,m} \sim
          \binom{n(n-1)}{m}
          \frac{3^{5/6}}{2}
          \frac{e^{\frac{m}{n} - \frac{\mu^3}{6}} \sqrt{2 \pi}}{n^{1/3}}
          \sum_{q \geq 0}
          3^{-q} e^{(-1)}_q
          s_q^-(\mu).
        \]
        This gives the main statement.
        To obtain the sparse case, we need to use the fact that
        when \( \mu \to -\infty \), the first summand of the sum over \( q \) is
        dominating, and therefore, this sum is asymptotically equivalent to
        \( \sqrt{\frac{2}{\pi}} \frac{e^{\mu^3/6}}{|\mu|^{-1}} 3^{-5/6} \)
        (see~\cite[Equation (10.3)]{janson1993birth}).
        \end{proof}

    \end{subsection}
    \begin{subsection}{Asymptotic analysis of elementary digraphs}
        \begin{theorem}
            \label{theorem:asymptotics:elementary}
            When \( m = n(1 + \mu n^{-1/3}) \) and \( \mu \) either stays in a
            bounded real interval, or \( \mu \to -\infty \) while \( \liminf m/n > 0 \)
            as \( n \to \infty \),
            \[
                \mathbb P( (n,m)\text{-digraph is elementary})
                \sim
                e^{-\mu^3/6}
                \sqrt{\dfrac{3\pi}{2}}
                \sum_{q \geq 0} 3^{-q}\ \widehat e_q^{\ (-1)} s_q^+(\mu),
            \]
            where the coefficients \( \widehat e_q^{\ (-1)} \) are given by
            \[
                \sum_{q \geq 0} \widehat e_q^{\ (-1)} y^q
                :=
                \dfrac{1}{\frac{y}{2} + E(y) + 3 y^2 E'(y)}.
            \]
            In particular, when \( \mu \to -\infty \), \( |\mu| \ll n^{-1/3}
            \),
            \[
                \mathbb P( (n,m)\text{-digraph is elementary})
                \sim
                1 - \dfrac{1}{2 |\mu|^3}.
            \]
        \end{theorem}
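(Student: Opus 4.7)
The proof would mirror that of \cref{theorem:asymptotics:dags}, starting from the exact expression of \cref{lemma:number:subcritical:dags}. First, I would analyse the singular behaviour as $T(z_1) \to 1$ of the ingredients appearing in the denominator: using \cref{th:complex}, $\eComplex_k(z_1) \sim e_k T(z_1)^{5k}/(1-T(z_1))^{3k}$; a direct computation via $z T'(z) = T(z)/(1-T(z))$ gives $V^\bullet(z) = T(z)^3/(2(1-T(z))^2)$ and $\eComplex^\bullet_k(z) \sim 3k\,e_k/(1-T(z))^{3k+2}$. Consequently, each $k$-th bracket of the denominator is asymptotically dominated by terms of order $(1-T(z_1))^{-(3k+2)}$, two powers more singular than for DAGs, since both the $(T-V^\bullet)\eComplex_k$ contribution and the $\eComplex^\bullet_k$ contribution produce singularities at that order.

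Next, I would split $[y^t]$ as $\sum_{p+q+r=t}$ exactly as in the DAG proof: the index $p$ corresponds to the numerator $\sum_j \eComplex_j(z_0) y^j$, contributing a leading term $e_p T(z_0)^{5p}/(1-T(z_0))^{3p}$; the index $q$ corresponds to the inverse denominator, whose $y^q$ coefficient — after careful bookkeeping of the substitution $v = -y(1-y)/(1+y)$, the outer factor $(1+y)/(1-y)$, and the three contributions $\eComplex_k$, $\eComplex_k \cdot (T-V^\bullet)$, and $\eComplex^\bullet_k$ — is identified with $\widehat e_q^{(-1)} = [y^q] 1/\widehat D(y)$, where $\widehat D(y) = y/2 + E(y) + 3y^2 E'(y)$, multiplied by an appropriate power of $1-T(z_1)$; and the index $r$ corresponds to the $y$-analytic factor $(\tfrac{1-y}{1+y})^n\, e^{2U(z_1)/(1-y)}$. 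I would then apply \cref{lemma:bdg} to each $(p,q)$-term with $r_0 = 3p+1/2$ (as for DAGs) and $r_1 = 3q+1/2$ (this change compared to DAGs reflects the increased singularity of the denominator and produces $s_q^+(\mu)$ instead of $s_q^-(\mu)$).

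Using $(2n-m+t)! \sim (2n-m)!\,n^t$, the $(p,q)$-dependent $n$-powers cancel, and $\sum_r n^{-r}[y^r]\bigl((\tfrac{1-y}{1+y})^n e^{2U(z_1)/(1-y)}\bigr)$ converges to an explicit constant as $n \to \infty$ (namely $e^{-1}$ after evaluation at the saddle $T(z_1) \to 1$, $U(z_1) = 1/2$, since $(\tfrac{1-1/n}{1+1/n})^n \to e^{-2}$ and $e^{2U/(1-1/n)} \to e$). The sum over $p$ collapses to $\sqrt{2/(3\pi)}\,e^{\mu^3/6}$ via the same computation as in \cref{theorem:asymptotics:dags}, leaving only the sum over $q$. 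Stirling's formula combined with $\binom{n(n-1)}{m}$ then simplifies the prefactor to produce the main statement. For the sparse regime $\mu \to -\infty$, the leading contribution is the $q = 0$ term, with $\widehat e_0^{(-1)} = 1$, and evaluating $s_0^+(\mu)$ via \cref{remark:H} yields the limit $1$; the correction $-1/(2|\mu|^3)$ arises from the $q = 1$ term combined with the first $|\mu|^{-3}$ correction to $s_0^+(\mu)$ coming from the asymptotic expansion of $A(y, \mu)$ as $\mu \to -\infty$.

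The main obstacle is the algebraic identification of $\widehat e_q^{(-1)}$ from the denominator: the three contributions $\eComplex_k$, $\eComplex_k \cdot (T-V^\bullet)$, and $\eComplex^\bullet_k$ have distinct singular orders in $1 - T(z_1)$, and they combine non-trivially with the $(1+y)/(1-y)$ factor and the substitution $v = -y(1-y)/(1+y)$. Verifying that the resulting $y^q$ coefficient is exactly the prescribed $[y^q] 1/\widehat D(y)$, and that all sub-leading corrections (including the polynomial remainder $P_k(T)$ in \cref{th:complex} and the next-to-leading terms of $\eComplex^\bullet_k$ and $V^\bullet$) are indeed negligible, requires careful tracking of both asymptotic variables $y$ and $1-T(z_1)$.
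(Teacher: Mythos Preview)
Your plan follows essentially the same route as the paper's own (very terse) proof: start from \cref{lemma:number:subcritical:dags}, analyse the singular behaviour of the denominator as $T(z_1)\to 1$, split $[y^t]$ as $\sum_{p+q+r=t}$, apply \cref{lemma:bdg} termwise, decouple the $p$- and $q$-sums, collapse the $p$-sum via the hypergeometric identity of \cite{janson1993birth}, and handle $\mu\to-\infty$ with \cref{remark:H}. You also correctly identify the algebraic matching of the denominator with $\widehat D(y)=y/2+E(y)+3y^2E'(y)$ as the crux, and your treatment of the $r$-sum and of the subcritical correction is sound.

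There is, however, a genuine gap in the step where you pass from ``each $k$-th bracket is dominated by $(1-T(z_1))^{-(3k+2)}$'' to ``$r_1=3q+1/2$''. These two claims are inconsistent: if the denominator really had the form $(1-T_1)^{-2}\cdot\widetilde D(Y)$ with $Y=y/(1-T_1)^3$, then $1/D\sim (1-T_1)^{2}\sum_q c_q Y^q$, and together with the factor $(1-T_1)^{1/2}$ from $e^{-V(z_1)}$ you would obtain $r_1=3q-5/2$, not $3q+1/2$. The point (glossed over both in your sketch and in the paper's one-line hint ``taking out a common multiple $(1-T(z_1))$'') is that the dominant pieces $(1-T_1)^{-(3k+2)}$ coming from $V^\bullet\cdot\eComplex_k$ and from $\eComplex^\bullet_k$ do \emph{not} furnish the $Y^k$-coefficient of the series to be inverted: they populate the $Y^{k+1}$ level (producing respectively the $Y/2$ and the $3Y^2E'(Y)$ in $\widehat D$), while the $Y^k$ level is supplied by the bare $\eComplex_k$ term, which is only $(1-T_1)^{-3k}$. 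Getting this regrouping right is exactly what determines both the exponent $r_1=3q+1/2$ and the shape of $\widehat D(y)$; your sketch asserts the correct outcome but the stated reason for it would lead to a different exponent. This is the one place where your plan would need to be reworked before it yields the theorem.
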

        \begin{proof}
            The key ingredient is the exact expression
            from~\cref{lemma:number:subcritical:dags}.
            As in the proof of~\cref{theorem:asymptotics:dags}, we can drop the
            terms that give negligible contributions and develop the coefficient
            operator \( [y^t] \) accordingly. The key difference between the
            proofs is the form of the denominator: after taking out a common
            multiple \( (1 - T(z_1)) \) (ignoring higher powers in variable \( y
            \)), the denominator can be again regarded as a formal power
            series in \( \frac{y}{(1 - T(z_1))^{3}} \). In order to obtain the
            asymptotics, the transformed expression should be developed,
            then~\cref{lemma:bdg} (bivariate semi-large powers) is applied, and
            finally the sums are decoupled. For the sum corresponding to
            variable \( z_0 \), we apply again the hypergeometric summation
            formula from~\cite{janson1993birth}.
            In order to settle the subcritical case \( \mu \to -\infty \), we
            apply the asymptotic approximation of \( s_q^+(\mu) \)
            from~\cref{remark:H}.
        \end{proof}

        \begin{remark}
            Curiously enough, the coefficient \( 1/2 \) in the subcritical
            probability can be given the same interpretation as a similar
            coefficient \( 5/24 \) arising in the probability that a random
            graph does not contain a complex component: namely the compensation
            factor of the simplest cubic forbidden multigraph.
        \end{remark}

    \end{subsection}
\end{section}

\paragraph*{Acknowledgements.}
We are grateful to
Olivier Bodini, Naina Ralaivaosaona,
Vonjy Rasendrahasina,
Vlady Ravelomanana, and Stephan Wagner
for fruitful discussions, and to the anonyomous referees whose suggestions
helped to improve this paper.

\printbibliography

\end{document}